\documentclass{amsart}[12]
\usepackage{mathrsfs, amsmath,amsfonts,amssymb}
\usepackage{amsmath}
\usepackage{mathtools}
\usepackage{graphicx}
\newtheorem{teo}{Theorem}
\newtheorem{pro}{Proposition}

\newtheorem{rem}{Remark}
\newtheorem{defi}{Definition}

\usepackage{fullpage}
\title{The Hyperbolic-type point process}

\author[N. Demni]{Nizar Demni}
\address{IRMAR, Universit\'e de Rennes 1\\ Campus de
Beaulieu\\ 35042 Rennes cedex\\ France}
\email{nizar.demni@univ-rennes1.fr}

\author[P. Lazag]{Pierre Lazag}
\address{I2M, CNRS, Aix-Marseille Universit\'e \\Technop\^ole de Ch\^ateau-Gombert \\ 13453 Marseille cedex 13 \\ France}
\email{pierre.lazag@univ-amu.fr}

\keywords{Determinantal point process; Poincar\'e disc; Ginibre-type point process; Weighted Bergman kernel.}

\begin{document}
\maketitle
\begin{abstract}
In this paper, we introduce a two-parameters determinantal point process in the Poincar\'e disc and compute the asymptotics of the variance of its number of particles inside a disc centered at the origin and of radius $r$ as $r \rightarrow 1^-$. Our computations rely on simple geometrical arguments whose analogues in the Euclidean setting provide a shorter proof of Shirai's result for the Ginibre-type point process. In the special instance corresponding to the weighted Bergman kernel, we mimic the computations of Peres and Virag in order to describe the distribution of the number of particles inside the disc. 
\end{abstract}

\section{Introduction}
Determinantal point processes are random point measures on locally compact  polish spaces whose correlation functions are determinants of locally trace-class non negative operators bounded by one (\cite{Sos}). They appeared in Macchi's paper \cite{Mac} under the name `Fermion processes' since Fermions obey the Pauli exclusion principle so that their wavefunctions are given by Slater determinants. By the Macchi-Soshnikov Theorem (\cite{Sos}, see also \cite{ShiTak}), given a measure space $(E,\mu)$ and an orthogonal projection onto a closed subspace $F \subset L ^2(E,\mu)$ with Hermitian kernel $K$, there exists a determinantal point process whose correlation functions are governed by $K$ and whose number of particles is almost surely equal to the dimension of $F$. For instance, unitarily-invariant random matrix models give rise to determinantal point processes with almost surely finite numbers of particles (\cite{Soh}), while the Fock and the Bergman spaces provide examples of infinite-dimensional determinantal point processes. Actually, the former corresponds to the Ginibre process which is the weak limit of the eigenvalues process of the Ginibre matrix model (\cite{Gin}) and the latter corresponds to the zero set of the hyperbolic Gaussian analytic function whose matrix-valued extension is also the weak limit of the eigenvalues of square truncations of Haar unitary random matrices (\cite{Kri}). For other examples and various constructions of determinantal point processes, we refer the reader to \cite{Bor}.  

On the other hand, the Fock and the Bergman spaces may be realized as the null eigenspaces of the Schr\"odinger operators with a uniform magnetic field, known as the Landau Laplacian, in the complex plane and in the Poincar\'e disc respectively. In the flat geometrical setting, the Landau Laplacian has a discrete spectrum - Euclidean Landau levels - labeled by the set of non negative integers whose eigenspaces are infinite-dimensional and consist in general of polyanalytic functions. Besides, the corresponding reproducing kernels were computed in \cite{AIM} and subsequently used in \cite{Shi} in order to define the Ginibre-type point processes, where the author derives the asymptotics of the variance of the number of particles in a disc centered at the origin and of radius $r$ as $r \rightarrow \infty$. Note that by analogy with the Ginibre process, a finite-dimensional version of the Ginibre-type point process was introduced and studied in \cite{Hai-Hed} yet without any reference to an underlying random matrix model. As to the negatively-curved geometrical setting, the Landau Laplacian alwas admits a continuous spectrum and a discrete spectrum-hyperbolic Landau levels - arises as soon as the magnetic field strength is large enough. The corresponding eigenspaces are infinite-dimensional as well, and the expressions of their reproducing kernels are also available (see e.g. \cite{EGIM} and references therein).

In this paper, we use these kernels introduce the hyperbolic analogue of the Ginibre-type point process, and call it in a similar fashion `the hyperbolic-type' point process. Doing so allows to generalize the zero set of the hyperbolic Gaussian analytic function within the class of determinantal processes, in opposite to the Gaussian random series considered in \cite{Buc}. Furthermore, the hyperbolic-type determinantal point process converges weakly to the Ginibre-type point process if we let the curvature of the disc tends to zero which is in agreement with the geometrical contraction principle. Our main result establishes the exact asymptotics of the variance of the number of particles lying inside a disc centered at the origin and of radius $r$ as $r \rightarrow 1^-$. Though this is the hyperbolic analogue of Shirai's result for the Ginibre-type point process, our proof is completely different from Shirai's one and may even be adapted to the Ginibre-type point process in order to write a different proof of Shirai's result. More precisely, using the invariance of the reproducing kernels under appropriate groups of transformations - the translation group for the complex plane and the M\"obius group for the Poincar\'e disc - we are led to the computation of the Euclidean and the hyperbolic areas of some planar region. For the Ginibre-type point process, the area of this region is already known and yields directly Shirai's result. As to the hyperbolic-type point process, the computations are more involved than those in the Euclidean setting, nonetheless we succeed to express this area as a incomplete hypergeometric integral and to derive the sought asymptotics. Nonetheless, in the special instance corresponding to weighted Bergman kernels, we mimic the computations done in \cite{Per-Vir} and obtain the full description of the number of particles inside the disc.       

The paper is organized as follows. For sake of completeness, we recall in the next section the definition of the Ginibre-type point process and write another proof of Shirai's result using the invariance of the reproducing kernels under translations. In section 3, we introduce the hyperbolic-type point process and prove our main result. In the last section, we describe the distribution of the number of particles inside the disc in the case of weighted Bergman kernels which corresponds to the lowest hyperbolic Landau level.
 
\section{The Ginibre-type point process revisited}
Let $E$ be a locally compact polish space and let $\textrm{Conf}(E)$ be the space of locally finite configurations, that is, the space of all discrete subsets of $E$ having a finite number of elements in any compact set. We can equip $\textrm{Conf}(E)$ with the sigma-algebra $ \mathcal{F}(E)$ generated by the maps :
\begin{align*}
N_A :\quad \textrm{Conf}(E) &\rightarrow \mathbb{N} \\
X &\mapsto |X \cap A |
\end{align*}
for all relatively compact subsets $ A \subset E$. Then,
\begin{defi} 
A point process is a probability measure $\mathbb{P}$ on $(\textrm{Conf}(E),\mathcal{F}(E))$. It is a \textit{determinantal point process} with correlation kernel $K$ and reference measure $\mu$ on $(E, \mathcal{B}(E))$ if for every $n \in \mathbb{N}$ and every compactly-supported bounded function $f : E^n \rightarrow \mathbb{C}$, one has :
\begin{align*}
\mathbb{E}_{\mathbb{P}} \left[ \sum_{x_1, \dots x_n \in X} f(x_1,\dots,x_n) \right] = \int_{E^n}f(x_1,...,x_n) \det \left(K(x_i,x_j) \right)_{1 \leq i , j \leq n} d\mu(x_1)...d\mu(x_n).
\end{align*}
Here, the sum in the left-hand side is over all simple $n$-points in the random configuration $X$.
\end{defi}

A well-known example is the Ginibre point process corresponding to the following data: 
\begin{itemize}
\item $E = \mathbb{C}$. 
\item $d\mu(z) = e^{-|z|^2}dz/\pi,$ $dz$ being the Lebesgue measure in $\mathbb{C}$. 
\item $F$ is the Fock space consisting of entire functions in the Hilbert space
\begin{equation*}
L^2(\mathbb{C}, e^{-|z|^2}dz/\pi)
\end{equation*}
whose reproducing kernel is $K_0(z,w) = e^{z\overline{w}}$. 
\end{itemize}
It arises as the weak limit of the eigenvalues process of the Ginibre random matrix model as the size of the matrix tends to infinity. On the other hand, the Fock space may be realized as the null space of the Euclidean Landau Laplacian with uniform magnetic field\footnote{Without loss of generality, the magnetic field strentgh may be taken equal to one.}: 
\begin{equation} \label{ELL}
-\frac{\partial^2}{\partial z \partial \overline{z}} + \overline{z}\frac{\partial}{\partial \overline{z}}
\end{equation}
which has discrete spectrum given by non negative integers (\cite{AIM}). Besides, for any $n \in \mathbb{N}$, the $n$-th eigenspace consists of polyanalytic functions which are solutions of the generalized Cauchy-Riemann equation: 
\begin{equation*}
\left(\frac{\partial}{\partial \overline{z}}\right)^{n+1} f = 0, \quad n \geq 0,
\end{equation*}
and its reproducing kernel reads (\cite{AIM}): 
\begin{equation*}
K_n(z,w) = e^{z\overline{w}} L_n(|z-w|^2), \quad z,w \in \mathbb{C},
\end{equation*}
where $L_n$ is the $n$-th Laguerre polynomial. In \cite{Shi}, the author introduced the Ginibre-type point process $\mathbb{P}_n$ at level $n$ as the determinantal point process with kernel $K_n$. There, he also computed the variance of the number of its particles inside a disc $D_r$ centered at the origin and of radius $r$, and showed that it grows linearly 
as $r \rightarrow \infty$. In this respect, recall that the reproducing property leads to the following formula: 
\begin{pro}
Let $\mathbb{P}$ be a determinantal point process defined through a reproducing kernel $K$ with respect to a reference measure $\mu$. Then, for any relatively compact set $A \subset E$, the variance of $N_A$ is giveny by:
\begin{align} \label{VF}
\textrm{Var}_{\mathbb{P}}(N_A)=\int_A d\mu(z) \int_{E \setminus A} d\mu(w) |K(z,w)|^2
\end{align}
\end{pro}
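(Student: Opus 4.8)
The plan is to extract the first and second factorial moments of $N_A$ from the defining correlation identity and then use the reproducing property of $K$ to collapse the outcome. First I would apply the definition of the determinantal point process with $n=1$ and (an approximation by compactly supported bounded functions of) the test function $\mathbf{1}_A$, which gives
\[
\mathbb{E}_{\mathbb{P}}[N_A] = \int_A K(z,z)\, d\mu(z).
\]
Then, taking $n=2$ and $f = \mathbf{1}_{A\times A}$, the left-hand side becomes the expected number of ordered pairs of distinct points of $X$ lying in $A$, that is $\mathbb{E}_{\mathbb{P}}[N_A(N_A-1)]$, while the $2\times 2$ determinant in the right-hand side equals $K(z,z)K(w,w) - |K(z,w)|^2$ because $K$ is Hermitian. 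Hence
\[
\mathbb{E}_{\mathbb{P}}[N_A^2] = \mathbb{E}_{\mathbb{P}}[N_A] + \int_{A\times A}\big(K(z,z)K(w,w) - |K(z,w)|^2\big)\, d\mu(z)\, d\mu(w),
\]
and subtracting $\big(\mathbb{E}_{\mathbb{P}}[N_A]\big)^2 = \int_{A\times A} K(z,z)K(w,w)\, d\mu(z)\, d\mu(w)$ yields
\[
\textrm{Var}_{\mathbb{P}}(N_A) = \int_A K(z,z)\, d\mu(z) - \int_{A\times A} |K(z,w)|^2\, d\mu(z)\, d\mu(w).
\]

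The second step is to rewrite the diagonal term. Since $K$ is the reproducing kernel of the closed subspace $F \subset L^2(E,\mu)$, for $\mu$-almost every $z$ we have $K(\cdot,z) \in F$ and the reproducing identity gives
\[
K(z,z) = \langle K(\cdot,z), K(\cdot,z)\rangle_{L^2(E,\mu)} = \int_E |K(z,w)|^2\, d\mu(w),
\]
again using $K(w,z) = \overline{K(z,w)}$. Splitting this integral along $E = A \sqcup (E\setminus A)$ and substituting into the previous display, the contribution of $A$ cancels the term $\int_{A\times A}|K(z,w)|^2$ (by Fubini), leaving precisely \eqref{VF}.

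The only points that require care—rather than being genuine obstacles—are the approximation of the indicators $\mathbf{1}_A$, $\mathbf{1}_{A\times A}$ by compactly supported bounded functions and the uses of Fubini--Tonelli. Both are legitimate because the local trace-class hypothesis on $K$ makes $z \mapsto K(z,z)$ integrable on the relatively compact set $A$, so that $\int_A d\mu(z)\int_E |K(z,w)|^2\, d\mu(w) = \int_A K(z,z)\, d\mu(z) < \infty$ and every double integral above converges absolutely; in particular this shows a posteriori that $\textrm{Var}_{\mathbb{P}}(N_A)$ is finite.
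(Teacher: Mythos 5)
Your proof is correct and is exactly the argument the paper has in mind: it states the proposition without proof, remarking only that ``the reproducing property leads to the following formula,'' and your derivation --- extracting $\mathbb{E}[N_A]$ and $\mathbb{E}[N_A(N_A-1)]$ from the $n=1,2$ correlation identities and then using $K(z,z)=\int_E |K(z,w)|^2\,d\mu(w)$ to cancel the diagonal term --- is the standard way to fill in that step. Your closing remarks on approximation of indicators and on local trace-class integrability are the right points to flag and are handled correctly.
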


With the help of \eqref{VF} and the convolution property of Laguerre polynomials, the following result was proved in \cite{Shi}:
\begin{equation*}
\textrm{V}_n(N_r) = \frac{r}{\pi}\int_0^{\infty} dt |L_n(t)|^2 e^{-t} \int_0^{t \wedge 4r^2} \sqrt{1 - \frac{x}{4r^2}} \frac{dx}{\sqrt{x}}.
\end{equation*}
Here, $\textrm{V}_n$ stands for the variance with respect to $\mathbb{P}_n$, and $N_r$ denotes - here and after - the number of particles inside a disc $D_r$ centered at the origin and of radius $r >0$. In the sequel, we write a shorter proof of this result which has the merit to apply to the hyperbolic-type point process introduced later since it relies on geometrical arguments. To this end, perform the variables change $t \rightarrow t^2, x \rightarrow (2rx)^2$ in order to rewrite Shirai's formula as 
\begin{align}\label{ShiFor}
\textrm{V}_n(N_r) & = 8 \pi r^2 \int_0^{\infty}t dt |L_n(t^2)|^2 \frac{e^{-t^2}}{\pi^2} \int_{0}^{(t/2r) \wedge 1} \sqrt{1 - x^2}dx \nonumber
\\& =  \int_{\mathbb{C}} |L_n(|z|^2)|^2 \frac{e^{-|z|^2}}{\pi^2} \int_{0}^{(|z|/2r) \wedge1}(4 r^2) \sqrt{1 - x^2}dx.
\end{align} 
On the hand, start from \eqref{VF} and use the invariance under translations of $K_n(z,w)\mu(dw)\mu(dz)$ to get: 
\begin{align*}
\textrm{V}_n(N_r) & = \int_{D_r^c}\mu(dw) \int_{D_r} \mu(dz) |K_n(z,w)|^2 
\\& = \frac{1}{\pi^2} \int_{D_r^c}dw  \int_{D_r} dz e^{-|z-w|^2} |L_n(|z-w|^2)|^2
 \\& = \frac{1}{\pi^2} \int_{D_r^c}dw  \int_{w+ D_r} dz e^{-|z|^2} |L_n(|z|^2)|^2
 \\& = \frac{1}{\pi^2} \int_{\mathbb{C}}dz  e^{-|z|^2} |L_n(|z|^2)|^2  \int_{D_r^c \cap \{w, |w-z| < r\}} dw
 \\& = \frac{1}{\pi^2} \int_{\mathbb{C}}dz  e^{-|z|^2} |L_n(|z|^2)|^2  \textrm{Area}(D_r^c \cap D_r(z)), 
 \end{align*}
where $D_r(z)$ is the disc centered at $z$ and of radius $r$. If $|z| \geq 2r$, then $\textrm{Area}(D_r^c \cap D_r(z)) = \pi r^2$ since $D_r(z) \subset D_r^c$, which coincides with the value of the inner integral in \eqref{ShiFor}: 
 \begin{equation*}
 4r^2  \int_{0}^{(|z|/2r) \wedge1} \sqrt{1 - x^2}dx = 4r^2  \int_{0}^{1} \sqrt{1 - x^2}dx = \pi r^2. 
 \end{equation*}
 Otherwise, if $|z| < 2r$, then $D_r^c \cap D_r(z)$ is the complementary in $D_r(z)$ of the overlapping of the discs $D_r$ and $D_r(z)$ and its area  is known to be equal to
\begin{align*}
\textrm{Area}(D_r^c \cap D_r(z)) &= \pi r^2 - 2r^2 \arccos\left(\frac{|z|}{2r}\right) + \frac{|z|}{2} \sqrt{4r^2 - |z|^2} 
\end{align*}
Again, this area coincides as well with the value of the inner integral in \eqref{ShiFor}: 
\begin{equation*}
4 r^2 \int_{0}^{(|z|/2r)} \sqrt{1 - x^2}dx = \pi r^2 -  2r^2\int_0^{\arccos(|z|/2r)} (1-\cos(2\theta))d\theta. 
\end{equation*}
Shirai's formula is proved. In a nutshell, the computations of $\textrm{V}_n(N_r)$ relies essentially on the invariance under translation of the integrand, on the transitive action of the translation group on $\mathbb{C}$ and on the knowledge of the expression of the Euclidean area of $D_r^c \cap D_r(z)$. As we shall see in the next section, the situation is very similar in the hyperbolic setting, yet the computations are tricky and involved.  

\section{The hyperbolic-type point process}
In this section, we introduce the hyperbolic-type point process. To this end, we recall from \cite{EGIM} the spectral decomposition of the hyperbolic Landau laplacian with uniform magnetic field\footnote{Unlike the Euclidean setting, the strength $\nu$ of the magnetic field can not be reduced to one.} $\nu \geq 0$. Let $\mathbb{D}$ be the unit disc, then the hyperbolic Landau laplacian is the following differential operator acting on smooth functions as: 
\begin{align*}
H_\nu := -4(1-z\bar{z})\left( (1-z\bar{z})\frac{\partial^2}{\partial z \partial \bar{z}} - 2\nu \bar{z}\frac{\partial}{\partial \bar{z}}\right).
\end{align*}
It is a densely defined operator in  $L^2(\mathbb{D},\lambda_{\nu})$ where 
\begin{equation*}
\lambda_{\nu}(dz) := (1-|z|^2)^{2\nu-2} dz,
\end{equation*}
and admits a unique self-adjoint extension which we also denote by the same symbol $H_\nu$. Besides, its spectrum has a purely continuous part $[1,+ \infty [$ and if $\nu > 1/2$, then a non negative discrete part arises and consists  of the so-called hyperbolic Landau levels:
\begin{align*}
\epsilon^{\nu}_m=4m(2\nu-m-1) ; \quad \quad m=0,1,...,[\nu -1/2], \quad 2(\nu-m) - 1 \neq 0,
\end{align*}
$[x]$ being the largest integer less than or equal to $x$. The corresponding eigenspaces are infinite-dimensional and the reproducing kernel $G^{\nu}_m$ associated with a given hyperbolic Landau level $\epsilon^{\nu}_m$ reads (\cite{EGIM}):
\begin{equation}
G_m^{\nu}(z,w) = \frac{2(\nu-m)-1}{\pi}(1-z\bar{w})^{-2\nu}\left( \frac{|1-z\bar{w}|^2}{(1-|z|^2)(1-|w|^2)}\right)^m P_m^{(0,2(\nu-m)-1)}\left(2\frac{(1-|z|^2)(1-|w|^2)}{|1-z\bar{w}|^2}-1\right)
\end{equation}
where $P_m^{(0,2(\nu-m)-1)}$ is the $m$-th Jacobi polynomial (\cite{AAR}). With these data in hands, we are ready to introduce the hyperbolic-type point process :
\begin{defi}
Let $\nu > 1/2$ and $m \in \{0,...,[\nu -1/2] \}$. The hyperbolic-type point process $\mathbb{P}^{\nu}_m$ at level $\epsilon_m^{\nu}$ is the determinantal point process with correlation kernel $G^{\nu}_m$.  
\end{defi}
In the sequel, we will denote the variance with respect to $\mathbb{P}^\nu_m$ by $\textrm{V}^\nu_m$. Notice that if $2\nu \geq 2$ is an integer and $m=0$, then $\mathbb{P}^{\nu}_0$ reduces to the singular locus of a complex Gaussian 
$(2\nu -1) \times (2\nu-1)$ matrix-valued random series in which case $G_0^{\nu}$ is the weighted Bergman kernel in $\mathbb{D}$ (\cite{Kri}). In particular, $\mathbb{P}^{1}_0$ is nothing else but the hyperbolic Gaussian determinantal process defined and studied in \cite{Per-Vir}, and is realized as the zeros of a Gaussian analytic series. In particular, it was proved there that 
\begin{equation}\label{PV}
\textrm{V}^1_0(N_r) = \frac{r^2}{1-r^4},
\end{equation}
and another proof of this result is given in \cite{Buc}. However, beware that the Gaussian analytic series in the disc studied in \cite{Buc} are in general not determinantal and that asymptotic formulas for the variance of $N_r$ are derived there from a formula quite similar to \eqref{VF}. More generally, the adaptation of our previous proof of Shirai's result yields the following formula for $\textrm{V}_m^\nu(N_r)$:  

\begin{pro}\label{pro2}
Let $\mathbb{P}^{\nu}_m$ be the hyperbolic-type point process at level $\epsilon^{\nu}_m$. Set
\begin{equation*}
f_{\nu,m}(x) :=  \left[\frac{(2(\nu-m)-1)}{\pi}(1-\tanh^2(x))^{\nu-m}P_m^{(0,2(\nu-m)-1)}\left(1-2\tanh^2(x)\right)\right]^2,
\end{equation*}
and recall the hyperbolic distance: 
\begin{equation*}
\cosh^2(d(z,w)) := \frac{|1-z\overline{w}|^2}{(1-|z|^2)(1-|w|^2)} = \frac{1}{1- \tanh^2(d(z,w))}.
\end{equation*}
Then,
\begin{equation}\label{Int1}
\textrm{V}_m^\nu(N_r)= 2\int_\mathbb{D} \lambda_0(dz) f_{\nu,n}(d(z,0))\int_{r \vee (||z|-r|)/(1-|z|r)}^{(|z| + r)/(1+|z| r)} \arccos \left( \frac{t^2+|C_{z,r}|^2-R_{z,r}^2}{2t|C_{z,r}|} \right)\frac{tdt}{(1-t^2)^2},
\end{equation}
where 
\begin{align*}
C_{z,r} := \frac{1-r^2}{1-|z|^2r^2}z \quad , \quad R_{z,r} := \frac{1-|z|^2}{1-|z|^2r^2}r.
\end{align*}
\end{pro}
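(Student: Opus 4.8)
The plan is to replay, in the hyperbolic setting, the three-step argument used above for the Ginibre-type process. \emph{Reduction of the integrand.} Starting from \eqref{VF} with $A=D_r$, the first step is the pointwise identity
\begin{equation*}
|G^{\nu}_m(z,w)|^2\,\lambda_{\nu}(dz)\,\lambda_{\nu}(dw)=f_{\nu,m}(d(z,w))\,\lambda_0(dz)\,\lambda_0(dw),
\end{equation*}
which is a straightforward computation and amounts to the $\textrm{M\"ob}(\mathbb{D})$-invariance of its left-hand side, $d(z,w)$ being Möbius-invariant and $\lambda_0(dz)=(1-|z|^2)^{-2}dz$ being the invariant measure. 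In detail: $|G^{\nu}_m(z,w)|^2$ equals $(2(\nu-m)-1)^2/\pi^2$ times $|1-z\overline{w}|^{-4\nu}$, times the real nonnegative factor $\big(|1-z\overline{w}|^2/((1-|z|^2)(1-|w|^2))\big)^{2m}$, times the square of the Jacobi polynomial at $2(1-|z|^2)(1-|w|^2)/|1-z\overline{w}|^2-1$; multiplying by $(1-|z|^2)^{2\nu-2}(1-|w|^2)^{2\nu-2}$ and inserting $\cosh^2 d(z,w)=|1-z\overline{w}|^2/((1-|z|^2)(1-|w|^2))$, all powers of $|1-z\overline{w}|$ become powers of $\cosh^2 d(z,w)$ while the leftover powers of $1-|z|^2$, $1-|w|^2$ reduce to $(1-|z|^2)^{-2}(1-|w|^2)^{-2}$, and using $1/\cosh^2=1-\tanh^2$ one reads off $f_{\nu,m}(d(z,w))$. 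Thus $\textrm{V}_m^{\nu}(N_r)=\int_{\mathbb{D}\setminus D_r}\lambda_0(dw)\int_{D_r}\lambda_0(dz)\,f_{\nu,m}(d(z,w))$, all integrands below being nonnegative so that Tonelli's theorem permits every interchange of integrals.

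\emph{Transport by a Möbius map and Fubini.} For fixed $w\in\mathbb{D}\setminus D_r$ put $\psi_w(z):=(w-z)/(1-\overline{w}z)$, an involutive automorphism of $\mathbb{D}$ with $\psi_w(w)=0$, $\psi_w(0)=w$, preserving $\lambda_0$. Substituting $z\mapsto\psi_w(z)$ in the inner integral and using $d(\psi_w(z),w)=d(\psi_w(z),\psi_w(0))=d(z,0)$ turns it into $\int_{\psi_w(D_r)}f_{\nu,m}(d(z,0))\,\lambda_0(dz)$. Since $\zeta\mapsto(a-\zeta)/(1-\overline{a}\zeta)$ maps $\{|\zeta|<r\}$ onto the Euclidean disc centered at $(1-r^2)a/(1-|a|^2r^2)$ of radius $(1-|a|^2)r/(1-|a|^2r^2)$, we have $\psi_w(D_r)=D_{R_{w,r}}(C_{w,r})$, where $D_R(C)$ denotes the Euclidean disc of center $C$ and radius $R$; equivalently this is the hyperbolic disc centered at $w$ with the same hyperbolic radius as $D_r$, and it lies inside $\mathbb{D}$. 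Exchanging the order of integration gives
\begin{equation*}
\textrm{V}_m^{\nu}(N_r)=\int_{\mathbb{D}}\lambda_0(dz)\,f_{\nu,m}(d(z,0))\int_{\{w\in\mathbb{D}\setminus D_r\,:\,z\in\psi_w(D_r)\}}\lambda_0(dw).
\end{equation*}
Here I would use the symmetry $|\psi_w(z)|=|\psi_z(w)|$ (the numerators $w-z$, $z-w$ have equal moduli, and so do the denominators $1-\overline{w}z$, $1-\overline{z}w$, being complex conjugates), which yields $z\in\psi_w(D_r)\Leftrightarrow|\psi_w(z)|<r\Leftrightarrow|\psi_z(w)|<r\Leftrightarrow w\in\psi_z(D_r)=D_{R_{z,r}}(C_{z,r})$. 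Hence the inner region is $(\mathbb{D}\setminus D_r)\cap D_{R_{z,r}}(C_{z,r})$, which is the exact hyperbolic counterpart of the Euclidean overlap set $D_r^c\cap D_r(z)$ appearing in the revisited proof of Shirai's result.

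\emph{The area in polar coordinates.} It remains to evaluate $\lambda_0\big((\mathbb{D}\setminus D_r)\cap D_{R_{z,r}}(C_{z,r})\big)$. Writing $w=te^{i\theta}$ one has $\lambda_0(dw)=t\,dt\,d\theta/(1-t^2)^2$, and for fixed $t$ the condition $te^{i\theta}\in D_{R_{z,r}}(C_{z,r})$ reads $\cos(\theta-\arg C_{z,r})>(t^2+|C_{z,r}|^2-R_{z,r}^2)/(2t|C_{z,r}|)$, describing a $\theta$-arc of length $2\arccos\big((t^2+|C_{z,r}|^2-R_{z,r}^2)/(2t|C_{z,r}|)\big)$ exactly when the right-hand side lies in $(-1,1)$. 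Together with the constraint $t\ge r$ from $\mathbb{D}\setminus D_r$, and after checking that the ``full circle inside the disc'' regime $t<R_{z,r}-|C_{z,r}|$ cannot occur for $t\ge r$ (which only uses $r<1$), the radial variable runs over $\big(r\vee||C_{z,r}|-R_{z,r}|,\ |C_{z,r}|+R_{z,r}\big)$. Finally the algebraic identities $|C_{z,r}|-R_{z,r}=(|z|-r)/(1-|z|r)$ and $|C_{z,r}|+R_{z,r}=(|z|+r)/(1+|z|r)$ rewrite these endpoints as those in \eqref{Int1}, and the factor $2$ coming from $2\arccos$ produces \eqref{Int1}.

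\emph{Expected main obstacle.} The first two steps are essentially forced once the Möbius invariance is noticed; the delicate part is the bookkeeping in the last step — deciding, as a function of $t$, whether the circle $\{|w|=t\}$ crosses, misses, or engulfs $D_{R_{z,r}}(C_{z,r})$, hence the precise limits $t_{\min}$, $t_{\max}$ — together with the simplifications of $||C_{z,r}|-R_{z,r}|$ and $|C_{z,r}|+R_{z,r}$. (The genuinely hard analysis announced in the introduction, namely recasting \eqref{Int1} as an incomplete hypergeometric integral and extracting the $r\to 1^-$ asymptotics, lies beyond this statement.)
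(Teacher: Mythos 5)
Your proposal is correct and follows essentially the same route as the paper: reduce the integrand to $f_{\nu,m}(d(z,w))\,\lambda_0(dz)\lambda_0(dw)$, transport by the Möbius involution, swap integrals so that the inner integral becomes the hyperbolic area of $D_r^c\cap D(C_{z,r},R_{z,r})$, and evaluate it in polar coordinates. The only cosmetic difference is that you identify $\{w:\ z\in\psi_w(D_r)\}$ via the symmetry $|\psi_w(z)|=|\psi_z(w)|$ whereas the paper invokes the involution property $g_w\circ g_w=\mathrm{id}$; these are equivalent one-line observations.
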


Before proving this proposition, we state the main result of our paper:

\begin{teo} \label{teo1}
The following asymptotics of $\textrm{V}_m^\nu(N_r)$ holds: 
\begin{equation*}
\textrm{V}_m^\nu(N_r) \sim \frac{C^{\nu}_m}{1-r^2}, \qquad r \rightarrow 1^-,
\end{equation*}
where 
\begin{align*}
C^{\nu}_m & =  \int_{\mathbb{D}}\lambda_0(dz)f_{\nu,m}(d(z,0))\arccos(1-2|z|^2).
\end{align*}
\end{teo}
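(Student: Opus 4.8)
The plan is to isolate the divergent factor $1/(1-r^2)$ in the integral representation \eqref{Int1} of Proposition~\ref{pro2} and then to pass to the limit $r\to1^-$ under the outer integral sign by dominated convergence. Since only the regime $r\to1^-$ is relevant, I may assume $r>|z|$, in which case the lower endpoint of the inner $t$-integral in \eqref{Int1} equals $r$. Substituting $v=(1-t^2)/(1-r^2)$ there (so $t\,dt=-\tfrac12(1-r^2)\,dv$, $(1-t^2)^2=(1-r^2)^2v^2$, with $t=r$ giving $v=1$ and $t=(|z|+r)/(1+|z|r)$ giving $v=v_0(z,r):=(1-|z|^2)/(1+|z|r)^2$) turns the representation into
\begin{equation*}
(1-r^2)\,\textrm{V}_m^\nu(N_r)=\int_{\mathbb D}\lambda_0(dz)\,f_{\nu,m}(d(z,0))\,g_r(z),
\end{equation*}
where $g_r(z):=\int_{v_0(z,r)}^{1}\arccos\!\big((t(v)^2+|C_{z,r}|^2-R_{z,r}^2)/(2\,t(v)\,|C_{z,r}|)\big)\,dv/v^2$ and $t(v)=\sqrt{1-(1-r^2)v}$. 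It then suffices to prove that $g_r(z)\to\arccos(1-2|z|^2)$ pointwise on $\mathbb D$ and that the integrand above is bounded, uniformly in $r$ near $1$, by a fixed $\lambda_0$-integrable function of $z$.

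For the pointwise limit, fix $z$ with $|z|=\rho<1$. Using the elementary identities $|C_{z,r}|=(1-r^2)\rho/(1-\rho^2r^2)$ and $1-R_{z,r}^2=(1-r^2)(1-\rho^4r^2)/(1-\rho^2r^2)^2$ one checks that $C_{z,r}\to0$, $R_{z,r}\to1$, $v_0(z,r)\to(1-\rho)/(1+\rho)$, and that the argument of the arccosine tends, uniformly in $v\in(0,1]$, to $(1+\rho^2-v(1-\rho^2))/(2\rho)$. As that argument is bounded and $v_0(z,r)$ stays bounded away from $0$, dominated convergence in $v$ gives that $g_r(z)$ converges to $\int_{(1-\rho)/(1+\rho)}^{1}\arccos\!\big((1+\rho^2-v(1-\rho^2))/(2\rho)\big)\,dv/v^2$. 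I would evaluate this last integral by substituting $v=(1+\rho^2-2\rho\cos w)/(1-\rho^2)$, which maps $w\in[0,\arccos\rho]$ increasingly onto $v\in[(1-\rho)/(1+\rho),1]$ and turns the arccosine into $w$; one is left with $2\rho(1-\rho^2)\int_0^{\arccos\rho}w\sin w\,(1+\rho^2-2\rho\cos w)^{-2}\,dw$, which, after an integration by parts (using that $\rho\sin w\,(1+\rho^2-2\rho\cos w)^{-2}=-\tfrac12\tfrac{d}{dw}(1+\rho^2-2\rho\cos w)^{-1}$) and the standard primitive of $(1+\rho^2-2\rho\cos w)^{-1}$, collapses to $\pi-2\arccos\rho=\arccos(1-2\rho^2)$.

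The domination is where the geometry of \eqref{Int1} pays off. The point is that $C_{z,r}$ and $R_{z,r}$ are exactly the Euclidean centre and radius of the hyperbolic disc $D_r(z):=\{(w+z)/(1+\overline z w):|w|<r\}$, i.e.\ of the image of $D_r$ under the M\"obius automorphism of $\mathbb D$ sending $0$ to $z$; the arccosine in \eqref{Int1} is then the half-angle under which the circle $\{|w|=t\}$ sees $D_r(z)$, and $t(1-t^2)^{-2}$ is the radial density of $\lambda_0$, so that $g_r(z)=(1-r^2)\,\lambda_0\big(D_r^{\,c}\cap D_r(z)\big)$. As $\lambda_0$ is M\"obius-invariant and $\lambda_0(D_r)=\pi r^2/(1-r^2)$, this yields $0\le g_r(z)\le(1-r^2)\,\lambda_0(D_r(z))=(1-r^2)\,\lambda_0(D_r)=\pi r^2\le\pi$ for all $z\in\mathbb D$ and $r\in(0,1)$. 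On the other hand, $\tanh^2 d(z,0)=|z|^2$ gives $f_{\nu,m}(d(z,0))=\big[(2(\nu-m)-1)\pi^{-1}(1-|z|^2)^{\nu-m}P_m^{(0,2(\nu-m)-1)}(1-2|z|^2)\big]^2$, which is continuous on $\mathbb D$ and is $O\big((1-|z|^2)^{2(\nu-m)}\big)$ as $|z|\to1^-$; since $\lambda_0(dz)=(1-|z|^2)^{-2}dz$ and $2(\nu-m)-2>-1$ (the constraints $m\le[\nu-1/2]$ and $2(\nu-m)-1\ne0$ force $\nu-m>1/2$), the function $\pi f_{\nu,m}(d(\cdot,0))$ is $\lambda_0$-integrable and dominates the integrand. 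Dominated convergence then gives $(1-r^2)\,\textrm{V}_m^\nu(N_r)\to\int_{\mathbb D}\lambda_0(dz)\,f_{\nu,m}(d(z,0))\,\arccos(1-2|z|^2)=C^\nu_m$, a finite quantity by the same estimate, which is the asserted asymptotics. (As a check, when $\nu=1$, $m=0$ one has $f_{1,0}(d(z,0))=\pi^{-2}(1-|z|^2)^2$, hence $C^1_0=\pi^{-2}\int_{\mathbb D}\arccos(1-2|z|^2)\,dz=\tfrac12$, in agreement with \eqref{PV}.)

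The only genuinely non-routine step is obtaining that uniform domination: estimating the arccosine in \eqref{Int1} crudely by $\pi$ gives only $g_r(z)\le\pi\big((1+|z|r)^2/(1-|z|^2)-1\big)$, which blows up as $|z|\to1^-$ and fails to be $\lambda_0 f_{\nu,m}$-integrable when $\nu-m$ is close to $1/2$; it is the reinterpretation of $g_r(z)$ as a rescaled hyperbolic area, together with the isometry-invariance of $\lambda_0$, that furnishes the sharp, $z$-independent bound $g_r\le\pi$. The remaining ingredients --- the change of variables, the pointwise limit of the arccosine argument, and the explicit evaluation of the limiting integral --- are elementary, if a little lengthy.
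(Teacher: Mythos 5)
Your proof is correct, but it takes a genuinely different route from the paper's. The paper transforms the inner integral of \eqref{Int1} by an integration by parts (formula \eqref{F1}) followed by the substitutions $t\mapsto t^2$ and $t\mapsto (F_{z,r}-t)/(F_{z,r}-E_{z,r})$, arriving at the incomplete hypergeometric-type integral \eqref{Int3}; it then computes the limit of each of the three resulting terms separately (\eqref{F2}, \eqref{F3}, \eqref{F4}) and recombines them into $\pi-2\arccos|z|=\arccos(1-2|z|^2)$. You instead extract the divergent factor in one stroke via the rescaling $v=(1-t^2)/(1-r^2)$, identify the limiting inner integral directly, and evaluate it in closed form through the substitution $v=(1+\rho^2-2\rho\cos w)/(1-\rho^2)$ and an integration by parts --- ultimately invoking the same entry 2.556(1) of Gradshteyn--Ryzhik and landing on the same constant (your numerical check $C^1_0=1/2$ against \eqref{PV} is correct). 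The genuine added value of your argument is the domination step: by recognizing the inner integral of \eqref{Int1} as $\lambda_0\bigl(D_r^c\cap g_z(D_r)\bigr)$ and using M\"obius invariance of $\lambda_0$, you obtain the $z$-uniform bound $g_r(z)\le(1-r^2)\lambda_0(D_r)=\pi r^2$, which, combined with the integrability of $f_{\nu,m}(d(\cdot,0))$ against $\lambda_0$ (guaranteed by $\nu-m>1/2$), rigorously justifies interchanging $r\to1^-$ with the $z$-integration; the paper passes to the limit under the integral sign without comment, so on this point your proof is the more complete one. One presentational caveat: the lower endpoint of the inner $t$-integral equals $r$ exactly when $|z|\le 2r/(1+r^2)$, a region slightly larger than $\{|z|<r\}$, and your formula for $g_r(z)$ as a $v$-integral from $v_0(z,r)$ to $1$ is only the correct expression there; this is harmless because for fixed $z$ the condition eventually holds as $r\to1^-$, and your intrinsic identification $g_r(z)=(1-r^2)\lambda_0\bigl(D_r^c\cap g_z(D_r)\bigr)$, which is what the domination actually uses, is valid for all $z$.
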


\begin{proof}[Proof of Proposition \ref{pro2}]
Firstly, it is straightforward that: 
\begin{equation} \label{eqvarhyp}
|G_m^{\nu}(z,w)|^2\lambda_{\nu}(dz)\lambda_{\nu}(dw) =  f_{\nu, m}(d(z,w)) \lambda_{0}(dz)\lambda_{0}(dw).
\end{equation}
Secondly, the hyperbolic area measure $\lambda_0$ and the hyperbolic distance are invariant under the action of the M\"obius transformations: 
\begin{equation*}
g_{w,\theta}:  z \mapsto e^{i\theta} \frac{w-z}{1-\overline{w}z} , \quad w \in \mathbb{D}, \theta \in [0,2\pi],
\end{equation*}
which act transitively on $\mathbb{D}$, see e.g. \cite{Zhu}. Note that $g_{w,0}$ is an involution, maps the origin to $w$ and may be written as 
\begin{equation*}
g_{w,0} = R_{\arg(w)}g_{|w|, 0}R_{-\arg(w)},
\end{equation*}
where 
\begin{equation*}
R_{\theta} = \left(\begin{array}{lr}
e^{i\arg(w)/2} & 0 \\ 
0 & e^{-i\arg(w)/2}
\end{array}\right).
\end{equation*}

Denoting simply $g_{w} = g_{w,0}$, it follows from \eqref{VF} that: 
\begin{align*}
\textrm{V}_m^\nu(N_r) & = \int_{D_r^c} \lambda_{\nu}(dw) \int_{D_r} \lambda_{\nu}(dz) |G_m^{\nu}(z,w)|^2 
\\& = \int_{D_r^c} \lambda_{0}(dw) \int_{D_r} \lambda_{0}(dz) f_{\nu, m}(d(z,g_w 0))
\\& =  \int_{D_r^c} \lambda_{0}(dw) \int_{g_wD_r} \lambda_{0}(dz) f_{\nu, m}(d(z,0)).
\\& =  \int_{D_r^c} \lambda_{0}(dw) \int_{R_{\arg(w)}g_{|w|} D_r} \lambda_{0}(dz) f_{\nu, m}(d(z,0)).
\end{align*}
Now, write
\begin{equation*}
g_{|w|} z = \frac{1}{|w|} \left[1- \frac{1-|w|^2}{|w|z+1}\right], 
\end{equation*}
then the image of $D_r$ under the inversion $z \mapsto 1/(|w|z+1)$ is the disc 
\begin{equation*}
D\left(\frac{1}{1-|w|^2r^2}, \frac{|w| r}{1-|w|^2r^2}\right), 
\end{equation*}
and in turn,
\begin{equation*}
g_{|w|}D_r = D\left(|C_{w,r}|, R_{w,r}\right). 
\end{equation*}
Note that when $|w| \rightarrow 1$ then $g_{|w|}D_r$ tends to the point $\{1\}$ while
\begin{equation*}
g_{r}D_r = D\left(\frac{r}{1+r^2}, \frac{r}{1+r^2}\right). 
\end{equation*}
Consequently, 
\begin{equation*}
\bigcup_{w \in D_r^c} g_w D_r = \bigcup_{w \in D_r^c}  R_{\arg(w)}g_{|w|}R_{-\arg(w)} D_r = \mathbb{D}. 
\end{equation*}
Moreover, given $z \in \mathbb{D}$, then $g_w(y) = z$ for some $y \in D_r$ is equivalent to $g_w(z) = y$ since $g_w$ is an involution. But, 
\begin{align*}
\{ w \in \mathbb{D} \quad ; \quad |g_w(z)| < r \}= D(C_{z,r}, R_{z,r} )
\end{align*}
is the disc centered at $C_{z,r}$ and of radius $R_{z,r}$. As a result, Fubini Theorem entails:
\begin{align*}
\textrm{V}_m^\nu(N_r)=\int_\mathbb{D} \lambda_0(dz) f_{\nu,m}(d(z,0)) \int_{D_r^c \cap D(C_{z,r},R_{z,r})} \lambda_0(dw).
\end{align*}
Finally, consider the inner integral and note that it does not depend on $\arg(z)$. Using polar coordinates, the range of $\arg(w), w \in D_r^c \cap D(C_{z,r},R_{z,r})$ may be determined (for fixed $|w|$) from Al Kashi's theorem applied to the triangle formed by the origin, $C_ {z,r}$ and one of the intersection points of $\partial D(0,t)$ and $\partial D(C_{z,r},R_{z,r})$, see Fig.1 below.
\begin{figure}[hbtp]
\centering
\includegraphics[scale=0.3]{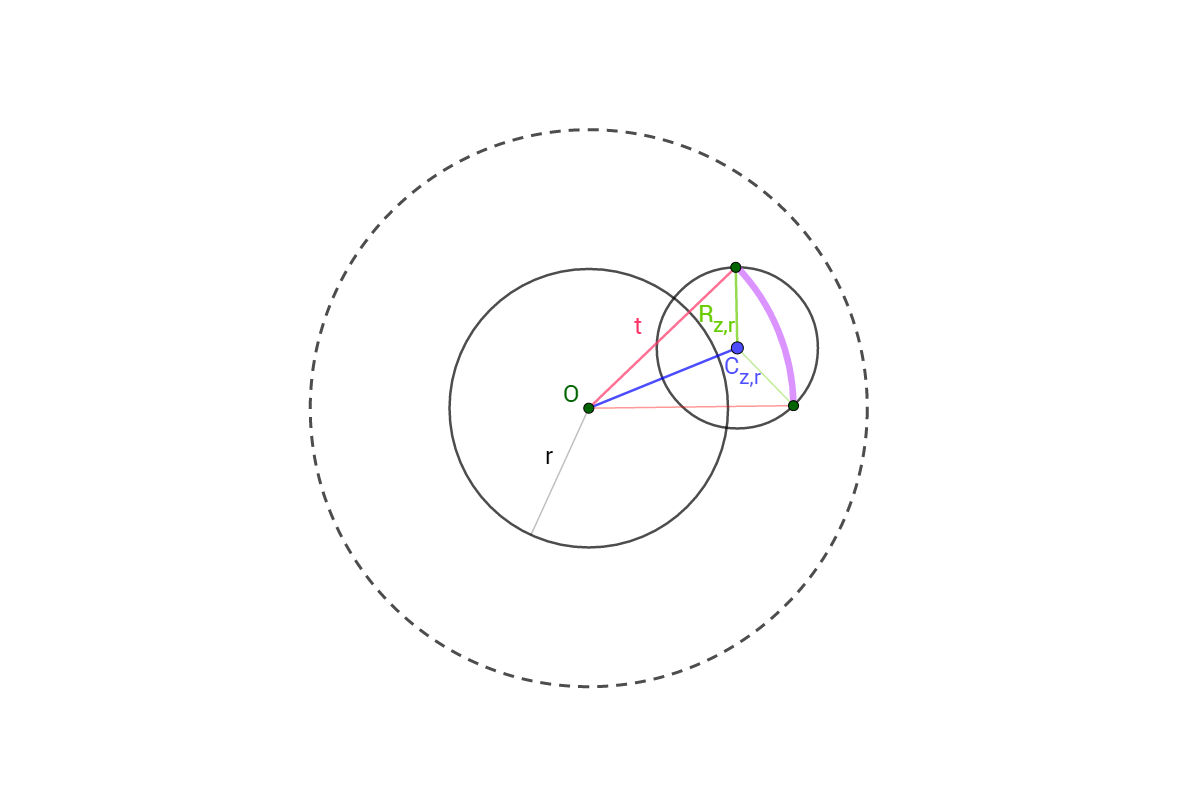}
\caption{range of $\arg(w)$ for fixed $|w| = t$ (in purple)}
\end{figure}

As to $|w|$, its range is determined as follows. The closest point to the origin and lying in $\partial D(C_{z,r},R_{z,r})$ is:
\begin{align*}
C_{z,r}-R_{z,r}\frac{C_{z,r}}{|C_{z,r}|}
\end{align*}
and its modulus is given by:
\begin{align*}
\left| |C_{z,r}|- R_{z,r} \right| = \frac{||z|-r|}{1-|z|r}.
\end{align*}
Similarly, the most distant point is:
\begin{align*}
C_{z,r}+R_{z,r}\frac{C_{z,r}}{|C_{z,r}|}
\end{align*}
and its modulus is given by :
\begin{align*}
|C_{z,r}|+ R_{z,r} = \frac{|z| + r}{1+|z| r}.
\end{align*}
Hence
\begin{align*}
w \in D(C_{z,r},R_{z,r}) \Rightarrow \left| |C_{z,r}|- R_{z,r} \right| < |w| < |C_{z,r}|+ R_{z,r},
\end{align*}
and consequently :
\begin{align*}
w \in D_r^c \cap D(C_{z,r},R_{z,r}) \Rightarrow  \left| |C_{z,r}|- R_{z,r} \right| \vee r < |w| < |C_{z,r}|+ R_{z,r}.
\end{align*}
Altogether yields :
\begin{align*}
\int_{\left| |C_{z,r}|-R_{z,r} \right| \vee r}^{|C_{z,r}|+R_{z,r}} 2\arccos \left( \frac{t^2+|C_{z,r}|^2-R_{z,r}^2}{2t|C_{z,r}|} \right)\frac{tdt}{(1-t^2)^2}.
\end{align*}
Since 
\begin{equation*}
|R_{z,r} - |C_{z,r}|| = \frac{||z|-r|}{1-|z|r}, \quad  |C_{z,r}|+R_{z,r} = \frac{|z| + r}{1+|z| r},
\end{equation*}
the proposition is proved. 
\end{proof}

\begin{rem}[Contraction principle]
From the very definition of $f_{\nu, m}$, we derive
\begin{equation*}
f_{\nu, m}(d(z,0)) =  \left[\frac{2(\nu-m)-1}{\pi}(1-|z|^2)^{\nu-m} P_m^{(0,2(\nu-m)-1)}\left(1-2|z|^2\right)\right]^2.
\end{equation*}
Let $R > 1$ be a positive real number and perform in \eqref{Int1} the variable change $z \mapsto z/R$: 
\begin{multline*}
\textrm{V}_m^\nu(N_r) = \left(\frac{2(\nu-m)-1}{\pi R^2}\right)^2 \int_{\mathbb{D}_R} \left(1-\frac{|z|^2}{R^2}\right)^{2\nu-2m-2}\left[P_m^{(0,2(\nu-m)-1)}\left(1-2\frac{|z|^2}{R^2}\right)\right]^2 
\\ \int_{D_r^c \cap D(C_{z/R,r},R_{z/R,r})} \lambda_0(dw),
\end{multline*}
where $\mathbb{D}_R$ is the disc centered at the origin and of radius $R$. Now, rescale $r \mapsto r/R$, take $\nu = R^2/2$ and perform the variable change $w \mapsto w/R$ in the inner integral. Then: 
\begin{multline*}
\textrm{V}_m^{R^2/2}(N_{r/R}) = \left(\frac{(R^2-2m)-1}{\pi R^2}\right)^2 \int_{\mathbb{D}_R} \left(1-\frac{|z|^2}{R^2}\right)^{R^2-2m-2}\left[P_m^{(0,R^2-2m-1)}\left(1-2\frac{|z|^2}{R^2}\right)\right]^2 
\\ \int_{D_r^c \cap D(\tilde{C}_{z,r},\tilde{R}_{z,r})} \frac{\lambda_0(dw)}{R^2},
\end{multline*}
where 
\begin{equation*}
\tilde{C}_{z,r} = \frac{1-(r/R)^2}{1-(|z|r/R^2)^2}z, \quad \tilde{R}_{z,r} = \frac{1-(|z|/R)^2}{1-(|z|r/R^2)^2}r. 
\end{equation*}
Using the limiting relation (\cite{AAR}): 
\begin{equation*}
\lim_{R \rightarrow \infty} P_m^{(0,R^2-2m-1)}\left(1-2\frac{|z|^2}{R^2}\right) = L_m(|z|^2), 
\end{equation*}
it follows that 
\begin{equation*}
\lim_{R \rightarrow \infty} R^2 \textrm{V}_m^{R^2/2}(N_{r/R}) = \frac{1}{\pi^2} \int_{\mathbb{C}} e^{-|z|^2} \left[L_m(|z|^2)\right]^2  \int_{D_r^c \cap D_r(z)} dw = \textrm{V}_n(N_r).
\end{equation*}
Such a result is expected to hold by the virtue of the geometrical contraction principle. However, what is not expected and less obvious is the rescaling of the magnetic field strength. 
\end{rem}
 
Now, we shall prove Theorem \ref{teo1}.  
\begin{proof}
The integral in the RHS of \eqref{Int1} may be transformed after performing an integration by parts into: 
\begin{multline}\label{F1}
 \int_{\left||C_{z,r}|-R_{z,r} \right| \vee r}^{|C_{z,r}|+R_{z,r}} \frac{1}{\sqrt{4|C_{z,r}|^2t^2 - (t^2+|C_{z,r}|^2-R_{z,r}^2)^2}} \frac{(t^2 + R_{z,r}^2- |C_{z,r}|^2)}{t(1-t^2)} dt \\ 
- \frac{1}{(1-r^2)}\arccos\left( \frac{r^2+|C_{z,r}|^2-R_{z,r}^2}{2r|C_{z,r}|} \right) {\bf 1}_{\{|z| < 2r/(1+r^2)\}}.
\end{multline}
Performing further the variables change $t \mapsto t^2$, the integral above becomes 
\begin{multline*}
\frac{1}{2} \int_{E_{z,r} \vee r^2}^{F_{z,r}} \frac{1}{\sqrt{-t^2 +2A_{z,r}t - B_{z,r}^2}} \frac{(t + B_{z,r})}{t(1-t)}dt 
\end{multline*}
where 
\begin{equation*}
A_{z,r} : = |C_{z,r}|^2+R_{z,r}^2, \quad B_{z,r} := R_{z,r}^2 - |C_{z,r}|^2, 
\end{equation*} 
\begin{equation*}
E_{z,r} := (R_{z,r} - |C_{z,r}|)^2, \quad F_{z,r} := (R_{z,r} + |C_{z,r}|)^2.
\end{equation*}
The trinomial inside the square root factorizes as 
\begin{equation*}
-t^2 +2A_{z,r}t - B_{z,r}^2 = (t-E_{z,r})(F_{z,r} - t),
\end{equation*}
and the following decomposition holds 
\begin{equation*}
\frac{(t + B_{z,r})}{t(1-t)} = \frac{B_{z,r}}{t} + \frac{1+B_{z,r}}{1-t}.
\end{equation*}
We are then led to: 
\begin{align*}
\frac{1}{2} \int_{E_{z,r} \vee r^2}^{F_{z,r}} \frac{1}{\sqrt{(t-E_{z,r})(F_{z,r} - t)}}\left[\frac{B_{z,r}}{t} + \frac{1+B_{z,r}}{1-t}\right] dt, 
\end{align*}
which may be turned into a incomplete hypergeometric-type integral after the variable change 
\begin{equation*}
t \mapsto \frac{F_{z,r} - t}{F_{z,r}-E_{z,r}}.
\end{equation*}
Indeed, quick computations yield:
\begin{align}\label{Int2}
\frac{1}{2} \int_{0}^{1 \wedge H_{z,r}} \frac{dt}{\sqrt{t(1-t)}}\left[\frac{B_{z,r}}{F_{z,r} - (F_{z,r} - E_{z,r})t} + \frac{1+B_{z,r}}{(F_{z,r} - E_{z,r})t + 1-F_{z,r}}\right] 
\end{align}
where 
\begin{equation*}
H_{z,r} := \frac{F_{z,r} - r^2}{F_{z,r} - E_{z,r}} = \frac{(2r+|z|(1+r^2))(1-|z|r)^2}{4r(1-|z|^2)} .
\end{equation*}
We similarly compute 
\begin{equation*}
F_{z,r} - E_{z,r} = 4 \frac{|z|(1-|z|^2)r(1-r^2)}{(1-|z|^2r^2)^2}, \quad 1-F_{z,r} = \frac{(1-|z|^2)(1-r^2)}{(1+|z|r)^2},
\end{equation*}
and set 
\begin{equation*}
U_{z,r} := \frac{F_{z,r} - E_{z,r}}{1-F_{z,r}} = \frac{4|z|r}{(1-|z|r)^2}, \quad V_{z,r} := \frac{F_{z,r}-E_{z,r}}{F_{z,r}} = 4\frac{|z|(1-|z|^2)r(1-r^2)}{(|z|+r)^2(1-|z|r)^2}.
\end{equation*}
As a result, \eqref{Int2} may be written as 
\begin{align}\label{Int3}
\frac{1}{2} \int_{0}^{1 \wedge H_{z,r}} \frac{dt}{\sqrt{t(1-t)}}\left[\frac{B_{z,r}}{F_{z,r}} \frac{1}{1 - V_{z,r}t} + \frac{1+B_{z,r}}{1-F_{z,r}}\frac{1}{1+ U_{z,r} t}\right].
\end{align}
Now, we are ready to let $r \rightarrow 1^-$ in \eqref{Int3}. More precisely, we obviously have
\begin{equation*}
\lim_{r \rightarrow 1^-} C_{z,r} = 0, \quad \lim_{r \rightarrow 1^-} R_{z,r} = 1, \quad \lim_{r \rightarrow 1^-} H_{z,r} = \frac{1-|z|}{2},
\end{equation*}
so that
\begin{equation}\label{F2}
\lim_{r \rightarrow 1^-} \frac{B_{z,r}}{F_{z,r}} \int_{0}^{1 \wedge H_{z,r}} \frac{dt}{\sqrt{t(1-t)}}\frac{1}{1 - V_{z,r}t} = \int_0^{(1-|z|)/2}\frac{dt}{\sqrt{t(1-t)}} = 2\arcsin\sqrt{\frac{1-|z|}{2}} = \arccos(|z|).
\end{equation}
Similarly 
\begin{align*}
\lim_{r \rightarrow 1^-} (1-r^2) \frac{1+B_{z,r}}{2(1-F_{z,r})}\int_{0}^{1 \wedge H_{z,r}} \frac{dt}{\sqrt{t(1-t)}}\frac{1}{1 +U_{z,r}t} & = \frac{(1+|z|)^2}{(1-|z|^2)} \int_0^{(1-|z|)/2}\frac{dt}{\sqrt{t(1-t)}}\frac{1}{1 +U_{z,1}t}
\\& =  \frac{2(1+|z|)}{(1-|z|)} \int_0^{(1/2) \arccos(|z|)} \frac{1}{1 +U_{z,1}\sin^2 t} dt
\\& =  \int_0^{\arccos(|z|)} \frac{(1-|z|^2)}{(1-|z|)^2 + 4|z|\sin^2 (t/2)} dt
\\& = \int_0^{\arccos(|z|)} \frac{(1-|z|^2)}{1+|z|^2 -2|z|\cos t} dt
\\& = 2\arctan \left(\frac{1+|z|}{1-|z|}\tan\left[\frac{\arccos(|z|)}{2}\right] \right),
\end{align*}
where the last equality follows from formula 2.556 (1) in \cite{Gra-Ryz}. Since
\begin{equation*}
\tan(u/2) = \frac{\sin(u)}{1+\cos(u)}
\end{equation*}
then 
\begin{align}\label{F3}
2\arctan \left(\frac{1+|z|}{1-|z|}\tan\left[\frac{\arccos(|z|)}{2}\right] \right) & = 2\arctan\left(\sqrt{\frac{1+|z|}{1-|z|}}\right) \nonumber 
\\& = \pi - 2\arccos\left(\sqrt{\frac{1+|z|}{2}}\right) \nonumber
\\& = \pi - \arccos(|z|).
\end{align}
Finally,
\begin{equation*}
\frac{r^2+|C_{z,r}|^2-R_{z,r}^2}{2r|C_{z,r}|} = \frac{|z|(1+r^2)}{2r} 
\end{equation*}
whence we deduce 
\begin{align}\label{F4}
\lim_{r \rightarrow 1^-}\arccos\left( \frac{r^2+|C_{z,r}|^2-R_{z,r}^2}{2r|C_{z,r}|} \right) {\bf 1}_{\{|z| < 2r/(1+r^2)\}} & = \arccos(|z|){\bf 1}_{\{|z| < 1\}}. 
\end{align}
Gathering \eqref{F1}, \eqref{F2}, \eqref{F3} and \eqref{F4}, we deduce that:
\begin{align*}
C^{\nu}_m & =  \int_{\mathbb{D}}\lambda_0(dz)f_{\nu,m}(d(z,0))\left[\pi - 2 \arccos(|z|)\right],
\\& = \int_{\mathbb{D}}\lambda_0(dz)f_{\nu,m}(d(z,0))\arccos(1-2|z|^2),
\end{align*}
which proves Theorem \eqref{teo1}. 
\end{proof}

\begin{rem}
For $m \in \{0, \dots, [\nu -(1/2)]$, let 
\begin{equation*}
\mathcal{A}_m^{\nu}(\mathbb{D}) = \{h \in L^2(\mathbb{D}, \lambda_{\nu}), H_{\nu}h  = \epsilon_m^{\nu} h\}
\end{equation*}
be the corresponding generalized Bergman space. An orthogonal basis $(\phi_m^{\nu}(j))_{j \geq 0}$ of $\mathcal{A}_m^{\nu}(\mathbb{D})$ was given for instance in \cite{EGIM}, eq. (2,2), and one readily sees that 
\begin{equation*}
C^{\nu}_m = \left(\frac{2(\nu-m)-1)}{\pi}\right)^2 \int_{\mathbb{D}}\lambda_{\nu}(dz)[\phi_m^{\nu}(m)]^2\arccos(1-2|z|^2).
\end{equation*}
By the virtue of eq. (2.4) in \cite{EGIM}, we further get the following bound: 
\begin{equation*}
C^{\nu}_m \leq \frac{(2(\nu-m)-1)^2}{\pi} ||\phi_m^{\nu}(m)||^2_{L^2(\mathbb{D}, \lambda_{\nu})} = 2(\nu-m) - 1.
\end{equation*}
\end{rem}

\section{The weighted Bergman kernel and the lowest hyperbolic Landau level}
Specializing Theorem \ref{teo1} with $m=0$ and integer values of $2\nu$, the asymptotics as $r \rightarrow 1^-$ of the variance of the number of zeros in $D_r$ of the complex Gaussian matrix-valued series. Nonetheless, we may mimick the proofs of Lemma 14 when the discs coincide and of Theorem 2 (i) in \cite{Per-Vir} in order to derive the distribution of $N_r$ for any real $\nu > 1/2$. More precisely,
\begin{teo}
For any $s \in (-1,1)$, 
\begin{equation*}
\mathbb{E}((1+s)^{N_r}) = \prod_{j=1}^{\infty} \left(1+s(2\nu-1)\frac{(2\nu)_{j-1}}{(j-1)!}B_{r} (j, 2\nu-1)\right),
\end{equation*}
where 
\begin{equation*}
B_r(j, 2\nu-1) := \int_0^{r^2} s^{j-1}(1-s)^{2\nu-2} ds,
\end{equation*}
is the incompete Beta integral. In particular, $N_r$ has the same distribution as 
\begin{equation*}
\sum_{j=1}^{\infty}X_j
\end{equation*}
where $(X_j)_{j \geq 1}$ are independent $\{0,1\}$-valued random variables defined on some probability space $(\Omega, \mathbb{F}, \mathbb{Q})$ such that 
\begin{equation*}
\mathbb{Q}(X_j = 1) = (2\nu-1)\frac{(2\nu)_{j-1}}{(j-1)!}B_{r} (j, 2\nu-1) = \frac{B_{r} (j, 2\nu-1)}{B_{1} (j, 2\nu-1)}.
\end{equation*}
\end{teo}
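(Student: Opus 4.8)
The plan is to follow the argument of Peres and Virág in \cite{Per-Vir}: realise $G_0^\nu$ as the reproducing kernel of a weighted Bergman space, diagonalise the kernel operator localised to the rotation-invariant disc $D_r$, and read off the law of $N_r$ from the structure theorem for determinantal processes governed by a Hermitian projection.

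First I would set up the Bergman realisation. Since $2\nu>1$, the monomials $(z^j)_{j\ge 0}$ lie in $L^2(\mathbb D,\lambda_\nu)$, and after normalisation the functions $\phi_j(z):=c_j z^j$ with $c_j^2=(\pi B(j+1,2\nu-1))^{-1}=(2\nu-1)(2\nu)_j/(\pi\,j!)$ form an orthonormal basis of the space $\mathcal A_0^\nu(\mathbb D)$ of holomorphic functions in $L^2(\mathbb D,\lambda_\nu)$, $B$ denoting the complete Beta function. Expanding $(1-z\overline w)^{-2\nu}=\sum_{j\ge0}\frac{(2\nu)_j}{j!}(z\overline w)^j$ then gives $\sum_{j\ge0}\phi_j(z)\overline{\phi_j(w)}=G_0^\nu(z,w)$, so $G_0^\nu$ is the orthogonal projection $P$ onto $\mathcal A_0^\nu(\mathbb D)$. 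Consequently $\mathbb P_0^\nu$ is governed by a Hermitian projection kernel, and the structure theorem applies: $N_r$ has the law of $\sum_i\xi_i$, where the $\xi_i$ are independent Bernoulli variables whose parameters are the eigenvalues, counted with multiplicity, of the compression $f\mapsto P(\mathbf 1_{D_r}f)$ acting on $\mathcal A_0^\nu(\mathbb D)$.

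The computation of these eigenvalues is where the rotational symmetry of $D_r$ does all the work, exactly as in \cite{Per-Vir}: the compressed operator commutes with the action $z\mapsto e^{i\theta}z$, whose isotypic components on $\mathcal A_0^\nu(\mathbb D)$ are the lines $\mathbb C\phi_j$, so it is diagonalised by $(\phi_j)_{j\ge0}$ with
\begin{equation*}
\lambda_j=\langle\mathbf 1_{D_r}\phi_j,\phi_j\rangle_{L^2(\lambda_\nu)}=c_j^2\int_{D_r}|z|^{2j}\lambda_\nu(dz)=c_j^2\,\pi\int_0^{r^2}s^j(1-s)^{2\nu-2}\,ds=(2\nu-1)\frac{(2\nu)_j}{j!}\,B_r(j+1,2\nu-1).
\end{equation*}
Re-indexing $j\mapsto j-1$ identifies $\lambda_{j-1}$ with $\mathbb Q(X_j=1)$, while the identity $\Gamma(j+2\nu-1)=(2\nu-1)(2\nu)_{j-1}\Gamma(2\nu-1)$ gives $(2\nu-1)(2\nu)_{j-1}/(j-1)!=1/B_1(j,2\nu-1)$, hence the second form of $\mathbb Q(X_j=1)$, which in particular lies in $[0,1)$. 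By independence $\mathbb E((1+s)^{N_r})=\prod_{j\ge0}(1+s\lambda_j)$, the asserted product; it converges absolutely for every $s\in(-1,1)$ since each factor is positive ($\lambda_j\in[0,1)$, $s>-1$) and $\sum_{j\ge0}\lambda_j=\mathbb E(N_r)=\frac{2\nu-1}{\pi}\int_{D_r}(1-|z|^2)^{-2}\,dz=(2\nu-1)\frac{r^2}{1-r^2}<\infty$.

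The one point that is not a routine Beta-integral or Pochhammer manipulation is the rigorous justification of the Bernoulli-sum representation for this infinite-rank projection. I would handle it as Peres and Virág do: truncate to $F_N=\mathrm{span}(\phi_0,\dots,\phi_{N-1})$, where the determinantal ensemble $\mathbb P_N$ has finitely many points and the representation $N_r\stackrel{d}{=}\sum_{j<N}\xi_j$ together with the identity $\mathbb E_{\mathbb P_N}((1+s)^{N_r})=\prod_{j<N}(1+s\lambda_j)$ are elementary linear algebra; since the compressed operator on $D_r$ is trace class by the bound above, $\mathbb P_N$ converges to $\mathbb P_0^\nu$ strongly enough that $N_r$ converges in distribution, and passing to the limit in the finite-product identity yields the theorem. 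Alternatively one simply invokes the general structure theorem for determinantal processes (as in \cite{Per-Vir}) and dispenses with the truncation.
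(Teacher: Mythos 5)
Your proof is correct, but it reaches the product formula by a genuinely different route from the one the paper takes. The paper follows Peres--Virag's original argument literally: it computes the binomial moments $\mathbb{E}\binom{N_r}{k}$ by expanding $\det(G_0^\nu(z_l,z_j))$ over permutations and grouping terms by cycle type --- each cycle of length $|\tau|$ contributing $\sum_{j}\bigl[(2\nu-1)\tfrac{(2\nu)_j}{j!}B_r(j+1,2\nu-1)\bigr]^{|\tau|}$, which is where the radial symmetry of $\lambda_\nu$ and $D_r$ enters --- then derives a recurrence for $\beta_k^\nu=\mathbb{E}\binom{N_r}{k}$ by isolating the cycle containing $1$, and integrates the resulting relation for the generating function $\beta(s)$ to obtain $\log\beta(s)$ as a sum of logarithms. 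You instead diagonalize the compressed operator $\mathbf 1_{D_r}G_0^\nu\mathbf 1_{D_r}$ directly by rotational invariance --- its eigenvalues are exactly the quantities $(2\nu-1)\tfrac{(2\nu)_j}{j!}B_r(j+1,2\nu-1)$ that appear as the cycle weights in the paper's computation --- and invoke the general structure theorem for determinantal processes governed by Hermitian projection kernels. Your route is shorter and conceptually more transparent (it explains why the Bernoulli parameters are incomplete Beta ratios), at the cost of outsourcing the Bernoulli-sum representation to a nontrivial general theorem (Hough--Krishnapur--Peres--Virag, or Shirai--Takahashi), whereas the paper's moment computation is self-contained and yields the explicit binomial-moment formula that the authors then reuse to compute $V(N_r)$ and recover the Peres--Virag variance identity at $\nu=1$. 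All the computational ingredients in your write-up check out: the normalization $c_j^2=(2\nu-1)(2\nu)_j/(\pi j!)$, the eigenvalue integral, the identity $B_1(j,2\nu-1)=(j-1)!/\bigl((2\nu-1)(2\nu)_{j-1}\bigr)$, and the trace bound $\sum_j\lambda_j=(2\nu-1)r^2/(1-r^2)$ guaranteeing convergence of the product.
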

\begin{proof}
Since the proof follows the lines written p. 15-16 in \cite{Per-Vir}, we shall only indicate what modifications should be performed there. Firstly, the statement of Lemma 14 for identical discs is modified as follows: the weighted Bergman kernel admits the following expansion: 
\begin{equation*}
G_0^{\nu}(z,w) = \frac{2\nu-1}{\pi}\frac{1}{(1-z\bar{w})^{2\nu}} = \frac{2\nu-1}{\pi} \sum_{j \geq 0} \frac{(2\nu)_j}{j!} (z\overline{w})^j.
\end{equation*}
But since $\lambda_{\nu}$ is radial, then the discussion in the bottom of p.15 is still valid and yields the binomial moments of $N_r$: for any $k \geq 1$, 
\begin{align}\label{BinMom}
\mathbb{E}_0^{\nu} (N_r(N_r-1)(N_r - k+1)) & = \int_{D_r^k} \det(G_0^{\nu}(z_l,z_j))_{1 \leq m,l \leq k} \,\, dz_1 \dots dz_k \nonumber 
\\& = \sum_{\sigma \in S_k} \prod_{\tau \in \sigma} (-1)^{|\tau|+1}\sum_{j \geq 0} \left[\frac{(2\nu-1)(2\nu)_j}{j!}B_r(j+1, 2\nu-1) \right]^{|\tau|}
\end{align}
where $S_k$ is the symmetric group of $\{1, \dots, k\}$, $|\tau|$ is the size of the cycle $\tau$ in the permutation $\sigma$.  Secondly, we come to the proof of Theorem (i): set 
\begin{equation*}
\beta_k^{\nu}:= \mathbb{E}\left(\binom{N_r}{k}\right) = \frac{1}{k!}\sum_{\sigma \in S_k} \prod_{\tau \in \sigma} (-1)^{|\tau|+1}\sum_{j \geq 0} \left[\frac{(2\nu-1)(2\nu)_j}{j!}B_r(j+1, 2\nu-1) \right]^{|\tau|},
\end{equation*}
and 
\begin{equation*}
\beta^{\nu}(s) = \sum_{k \geq 0} \beta_k^{\nu}s^k, \quad s \in (-1,1), \quad \beta_0^{\nu} = 1. 
\end{equation*}
Then, the recurrence equation (32) still holds and may be derived by simply separating the block of a given permutation $\sigma$ containing $1$ from the others: 
\begin{equation*}
\beta_k^{\nu} = \frac{1}{k} \sum_{l=1}^k (-1)^{l+1} \sum_{j \geq 0} \left[\frac{(2\nu-1)(2\nu)_j}{j!}B_r(j+1, 2\nu-1) \right]^{l} \beta_{k-l}^{\nu},
\end{equation*}  
and so does (33) where now 
\begin{equation*}
\psi(s) \equiv \psi^{\nu}(s):= \sum_{l=1}^{\infty} (-s)^{l+1} \sum_{j \geq 0} \left[\frac{(2\nu-1)(2\nu)_j}{j!}B_r(j+1, 2\nu-1) \right]^{l}.  
\end{equation*}
Finally, integrating with respect to $s$ leads to 
\begin{align*}
\log(\beta(s)) & = - \sum_{j \geq 0} \sum_{l=1}^{\infty}  \frac{1}{l} \left[-\frac{(2\nu-1)(2\nu)_j}{j!}sB_r(j+1, 2\nu-1) \right]^{l}
\\& = \sum_{j \geq 1}\log\left(1+s\frac{(2\nu-1)(2\nu)_{j-1}}{(j-1)!}B_r(j, 2\nu-1) \right).
\end{align*}
The rest of the proof is similar. 
\end{proof}
From \eqref{BinMom}, we readily obtain: 
\begin{multline*}
\mathbb{E}(N_r(N_r-1)) = \left\{\sum_{j \geq 0}\frac{(2\nu-1)(2\nu)_j}{j!}B_r(j+1, 2\nu-1) \right\}^2- \sum_{j \geq 0} \left[\frac{(2\nu-1)(2\nu)_j}{j!}B_r(j+1, 2\nu-1) \right]^{2} 
\end{multline*}
whence 
\begin{align*}
V(N_r) = \sum_{j \geq 0}\frac{(2\nu-1)(2\nu)_j}{j!}B_r(j+1, 2\nu-1) - \sum_{j \geq 0} \left[\frac{(2\nu-1)(2\nu)_j}{j!}B_r(j+1, 2\nu-1) \right]^{2}.
\end{align*}
If $\nu =1$, then 
\begin{equation*}
\frac{(2\nu-1)(2\nu)_j}{j!}B_r(j+1, 2\nu-1) = r^{2(j+1)},
\end{equation*} 
and one retrieves Peres-Virag's result \eqref{PV}. 

{\bf Acknowledgments}: The authors are grateful to Tomoyuki Shirai for his helpful remarks.

\end{document}